 \newtheorem{theorem}{Theorem}[section]
 \newtheorem{corollary}[theorem]{Corollary}
 \newtheorem{lemma}[theorem]{Lemma}
 \theoremstyle{definition}
 \theoremstyle{remark}
 \newtheorem*{remark}{Remark}
 \numberwithin{equation}{section}
\newcommand{\CC}{{\mathbb C}}
\newcommand{\DD}{{\mathbb D}}
\newcommand{\TT}{{\mathbb T}}
\newcommand{\cD}{{\mathcal D}}
\DeclareMathOperator{\hol}{\mathrm Hol}
\begin{document}

\title[Hadamard multipliers]{Hadamard multipliers on weighted Dirichlet spaces}

\author[Mashreghi]{Javad Mashreghi}
\address{D\'epartement de math\'ematiques et de statistique, Universit\'e Laval,
Qu\'ebec City (Qu\'ebec), Canada G1V 0A6}
\email{javad.mashreghi@mat.ulaval.ca}

\author[Ransford]{Thomas Ransford}
\address{D\'epartement de math\'ematiques et de statistique, Universit\'e Laval,
Qu\'ebec City (Qu\'ebec), Canada G1V 0A6}
\email{thomas.ransford@mat.ulaval.ca}

\thanks{JM supported by an NSERC grant. TR supported by grants from NSERC and the Canada Research Chairs program.}

\subjclass{Primary 41A10; Secondary 41A17, 40G05, 40G10}

\keywords{Dirichlet space, superharmonic weight,  Fej\'er theorem}

\date{2 July 2019}

\begin{abstract}
The Hadamard product of two power series is obtained by multiplying them coefficientwise.
In this paper we characterize those power series that act as Hadamard multipliers on
all weighted Dirichlet spaces on the disk with superharmonic weights, and we obtain sharp estimates
on the corresponding multiplier norms. Applications include an analogue of Fej\'er's theorem in these spaces, 
and a new estimate for the weighted Dirichlet integrals of dilates. 
\end{abstract}

\maketitle


\section{Introduction and statement of results}\label{S:intro}

\subsection{Weighted Dirichlet spaces}
Let  $\DD$ be the open unit disk and  $\TT$ the unit circle.
We write $\hol(\DD)$ for the set of holomorphic functions on $\DD$.
Given a positive superharmonic function $\omega$  on $\DD$ and $f\in\hol(\DD)$, we define
\begin{equation}\label{E:Domega}
\cD_\omega(f):=\int_\DD|f'(z)|^2\,\omega(z)\,dA(z),
\end{equation}
where $dA$ denotes normalized area measure on $\DD$.
The \emph{weighted Dirichlet space} $\cD_\omega$ 
is the set of $f\in\hol(\DD)$ with $\cD_\omega(f)<\infty$.
Defining
\begin{equation}\label{E:Domeganorm}
\|f\|_{\cD_\omega}^2:=|f(0)|^2+\cD_\omega(f) 
\qquad(f\in\cD_\omega),
\end{equation}
makes $\cD_\omega$ into a Hilbert space.

The  class of superharmonic weights was introduced 
and studied by Aleman \cite{Al93}. 
It includes two important subclasses:
\begin{itemize}
\item the power weights
$\omega(z):=(1-|z|^2)^\alpha ~(0\le\alpha\le 1)$,
which form a scale linking the classical Dirichlet space $\cD$
(where $\alpha=0$) to the Hardy space $H^2$
(where $\alpha=1$);
\item the harmonic weights, introduced earlier by Richter
\cite{Ri91} in connection with his analysis of shift-invariant subspaces
of the classical Dirichlet space.
\end{itemize}

\subsection{Hadamard multipliers}

Given formal power series $f(z):=\sum_{k=0}^\infty a_kz^k$ and $g(z):=\sum_{k=0}^\infty b_kz^k$,
we define their \emph{Hadamard product} to be the formal power series given by the formula
\[
(f*g)(z):=\sum_{k=0}^\infty a_kb_k z^k.
\]
Obviously, if one of $f$ or $g$ is a polynomial, then $f*g$ is a polynomial too.
Also,  if both $f,g\in\hol(\DD)$, then $f*g\in \hol(\DD)$ as well.

In this paper we study the \emph{Hadamard multipliers} of $\cD_\omega$,
namely those $h$ with the property that $h*f\in\cD_\omega$ whenever
$f\in\cD_\omega$. We also seek optimal estimates for 
$\cD_\omega(h*f)$ in terms of $\cD_\omega(f)$ and the Taylor coefficients of $h$.

\subsection{Statement of main results}

Given a sequence of complex numbers $(c_k)_{k\ge1}$, we write
$T_c$ for the infinite matrix
\begin{equation}\label{E:Tc}
T_c:=
\begin{pmatrix}
c_1 &c_2-c_1 &c_3-c_2 &c_4-c_3 &\dots\\
0 &c_2 &c_3-c_2 &c_4-c_3 &\dots\\
0 &0 &c_3 &c_4-c_3 &\dots\\
0 &0 &0 &c_4 &\dots\\
\vdots &\vdots &\vdots &\vdots &\ddots
\end{pmatrix}.
\end{equation}
This matrix may or may not act as a bounded operator on $\ell^2$.
If it does, then we write $\|T_c\|$ for its operator norm.
If not, then we write $\|T_c\|=\infty$.

If the $(c_k)$ are the  coefficients of a formal power series $h(z)=\sum_{k=0}^\infty c_kz^k$,
then we also write $T_h$ in place of $T_c$. Note that, in this situation, 
the coefficient $c_0$ plays no role.
Our main result is the following theorem.

\begin{theorem}\label{T:main}
Let $h(z)$ be a formal power series. The following statements are equivalent.
\begin{enumerate}[\normalfont (i)]
\item $h$ is a Hadamard multiplier of $\cD_\omega$ for every superharmonic weight $\omega$.
\item $T_h$ acts a bounded operator on $\ell^2$.
\end{enumerate}
In this case, for all superharmonic weights $\omega$ on $\DD$ and all $f\in\cD_\omega$, we have
\begin{equation}\label{E:main}
\cD_\omega(h*f)\le\|T_h\|^2\cD_\omega(f).
\end{equation}
The constant $\|T_h\|^2$ is best possible.
\end{theorem}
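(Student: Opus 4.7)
My strategy is to establish a sharp pointwise estimate
\[
\cD_\lambda(h*f) \le \|T_h\|^2\,\cD_\lambda(f) \qquad (\lambda \in \overline{\DD})
\]
for the generalized local Dirichlet integral $\cD_\lambda(f) := \int_\TT |(f(\zeta)-f(\lambda))/(\zeta-\lambda)|^2\,d\sigma(\zeta)$, and then to integrate it. The Riesz decomposition writes every positive superharmonic $\omega$ as a Poisson integral over $\TT$ plus a Green potential over $\DD$; together with Richter's identity $\cD_{P[\zeta]}(f) = \cD_\zeta(f)$ for $\zeta\in\TT$ and the M\"obius-change-of-variable computation $\cD_{G(\cdot,w)}(f) = \tfrac{1-|w|^2}{2}\cD_w(f)$ for $w\in\DD$, this yields a representation $\cD_\omega(f) = \int_{\overline{\DD}} \cD_\lambda(f)\,d\mu_\omega(\lambda)$ for a positive Borel measure $\mu_\omega$. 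Integrating the pointwise estimate then gives \eqref{E:main} and hence (i).

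The pointwise estimate rests on a computation that recovers $T_h$ exactly. For $f(z) = \sum_{k\ge 0} a_k z^k$, set $u_j := f(\lambda) - S_j(f;\lambda) = \sum_{k>j} a_k \lambda^k$. A direct expansion shows that $(f(z)-f(\lambda))/(z-\lambda)$ has $j$th Taylor coefficient $u_j/\lambda^{j+1}$, so by Parseval $\cD_\lambda(f) = \sum_j |u_j|^2/|\lambda|^{2(j+1)}$. An Abel summation applied to the tails $\sum_{k>j}c_k a_k \lambda^k$ identifies the corresponding quantity for $h*f$ as $(T_h u)_j/\lambda^{j+1}$, where $T_h u$ is the action of the matrix in \eqref{E:Tc}. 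Hence $\cD_\lambda(h*f) = \sum_j |(T_h u)_j|^2/|\lambda|^{2(j+1)}$. For $\lambda\in\TT$ the weights collapse to $1$ and the bound is immediate from $\|T_h u\|_{\ell^2}\le\|T_h\|\,\|u\|_{\ell^2}$.

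For $\lambda\in\DD$ the weighted $\ell^2$ inequality amounts to $\|D_\lambda^{-1}T_h D_\lambda\|_{\ell^2}\le\|T_h\|$, where $D_\lambda = \mathrm{diag}(\lambda^j)_{j\ge 0}$; the matrix $M_\lambda := D_\lambda^{-1}T_h D_\lambda$ has entries $\lambda^{m-j}(T_h)_{jm}$. Using the Poisson-integral identity $\lambda^n = \int_\TT \mu^n P(\lambda,\mu)\,d\sigma(\mu)$ ($n\ge 0$), one realizes
\[
M_\lambda = \int_\TT (D_\mu^{-1}T_h D_\mu)\,P(\lambda,\mu)\,d\sigma(\mu)
\]
as a Poisson average of the unitary conjugates $D_\mu^{-1}T_h D_\mu$ with $\mu\in\TT$, each of which has norm exactly $\|T_h\|$. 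Minkowski's integral inequality together with $\int_\TT P(\lambda,\cdot)\,d\sigma=1$ then yields $\|M_\lambda\|\le\|T_h\|$. I expect this Poisson-averaging step to be the main technical obstacle; the rest of the pointwise estimate is essentially bookkeeping.

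Finally, (i)$\Rightarrow$(ii) and the sharpness of the constant both follow from the boundary case. Specializing (i) to $\omega = P[\zeta_0]$ for a single $\zeta_0\in\TT$ and invoking the closed-graph theorem produces some $C<\infty$ with $\cD_{\zeta_0}(h*f)\le C\,\cD_{\zeta_0}(f)$; via the Abel-summation identity this reads $\|T_h u\|_{\ell^2}^2\le C\|u\|_{\ell^2}^2$ for every $u\in\ell^2$ of the form $u_j = f(\zeta_0)-S_j(f;\zeta_0)$. Since any $u\in\ell^2$ is realized this way by taking $a_0 = 0$ and $a_k = (u_{k-1}-u_k)\zeta_0^{-k}$ for $k\ge 1$, we conclude $\|T_h\|\le\sqrt{C}$, proving (ii). The same reconstruction applied to a near-extremal $u$ for $T_h$ produces $f$ with ratio $\cD_{P[\zeta_0]}(h*f)/\cD_{P[\zeta_0]}(f)$ arbitrarily close to $\|T_h\|^2$, establishing sharpness.
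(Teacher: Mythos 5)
Your proposal is correct and follows essentially the same route as the paper: both reduce to the local Dirichlet integrals $\cD_\zeta$ via the Riesz representation of $\omega$, identify the induced coefficient map as the matrix $T_h$ by Abel summation on the tails (your difference-quotient/Parseval formulation is just the paper's factorization $f=a+(z-\zeta)g$, $\cD_\zeta(f)=\|g\|_{H^2}^2$ in other clothing), and obtain the converse and sharpness from the boundary case $\zeta_0\in\TT$ via the closed graph theorem and the unitary parametrization $u\mapsto f$. Your Poisson-averaging bound $\|D_\lambda^{-1}T_hD_\lambda\|\le\|T_h\|$ for interior $\lambda$ is the same estimate the paper gets from the maximum principle applied to the subharmonic sum, so there is no genuinely new ingredient, and no gap.
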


\begin{remark}
Note that, for an individual weight $\omega$, the equivalence between (i) and (ii) may well fail to hold. 
For example, the Hadamard multipliers of the classical Dirichlet space $\cD$ 
are precisely those $h(z)$ with bounded coefficients. 
In particular,  the function $h(z)=z+z^3+z^5+z^7+\dots$ is a Hadamard multiplier of $\cD$.
However, in this case, 
\[
T_h:=
\begin{pmatrix}
1 &-1 &1 &-1 &1 &-1 &\dots\\
0 &0 &1 &-1 &1 &-1 &\dots\\
0 &0 &1 &-1 &1 &-1 &\dots\\
0 &0 &0 &0 &1& -1 &\dots\\
0 &0 &0 &0 &1& -1 &\dots\\
0 &0 &0 &0 &0& 0 &\dots\\
\vdots &\vdots &\vdots &\vdots &\vdots &\vdots &\ddots
\end{pmatrix},
\]
which does not act as a bounded operator on $\ell^2$.
\end{remark}

To apply Theorem~\ref{T:main}, 
it is helpful to have at our disposal some criteria for the boundedness of the matrix $T_c$,
as well as quantitative estimates for its norm. The next theorem collects together some results of this kind.

\begin{theorem}\label{T:Tc}
Let $c:=(c_k)_{k\ge1}$ be a sequence of complex numbers, and let $T_c$ be defined by \eqref{E:Tc}.
\begin{enumerate}[{\normalfont(i)}]
\item We have the pair of estimates:
\begin{align*}
\|T_c\|&\le \sup_{k\ge1}|c_k|+2\sup_{k\ge2}k|c_k-c_{k-1}|,\\
\|T_c\|&\ge \sup_{k\ge1}\Bigl\{|c_k|^2+(k-1)|c_{k}-c_{k-1}|^2\Bigr\}^{1/2}.
\end{align*}
\item If there exists $n\ge1$ such that $c_k=0$ for all $k>n$, then
\[
\|T_c\|^2\le (n+1)\sum_{k=1}^n |c_{k+1}-c_k|^2.
\]
\item If $c_k\to0$ as $k\to\infty$, then
\[
\|T_c\|\le \sum_{k=1}^\infty \sqrt{k(k+1)}|c_{k+2}-2c_{k+1}+c_k|.
\]
\end{enumerate}
\end{theorem}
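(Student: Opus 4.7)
The plan is to prove (i), (ii), (iii) in order, with (iii) reducing to (ii). Throughout I write $d_k := c_{k+1} - c_k$ and split $T_c = D + B$ into its diagonal part $D$ (entries $c_k$) and its strict upper-triangular part $B$ (with $B_{ij} = c_j - c_{j-1}$ for $i < j$).

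For part (i), the diagonal estimate $\|D\| = \sup_k |c_k|$ is immediate. To bound $\|B\|$ I apply the Schur test with symmetric weights $p_i = q_i = 1/\sqrt{i}$, using the hypothesis $|c_j - c_{j-1}| \le M/j$ where $M := \sup_{k\ge 2} k|c_k - c_{k-1}|$; the elementary integral comparisons $\sum_{j>i} j^{-3/2} \le 2/\sqrt{i}$ and $\sum_{i<j} i^{-1/2} \le 2\sqrt{j}$ make both the Schur row and column sums at most $2M$ times the appropriate weight, yielding $\|B\| \le 2M$. The lower estimate is immediate from evaluating $T_c$ on the standard basis vector $e_k$: the $k$-th column contains $k-1$ copies of $c_k - c_{k-1}$ followed by $c_k$, whence $\|T_c e_k\|^2 = |c_k|^2 + (k-1)|c_k - c_{k-1}|^2$.

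For part (ii), with $c_k = 0$ for $k > n$ (hence $c_{n+1} = 0$ and $c_i = -\sum_{k=i}^n d_k$), a short manipulation gives
\[
(T_c x)_i = \sum_{k=i}^n d_k (x_{k+1} - x_i) \qquad (1 \le i \le n),
\]
and zero otherwise. Cauchy--Schwarz gives $|(T_c x)_i|^2 \le \|d\|^2 \sum_{k=i}^n |x_{k+1} - x_i|^2$, and summing on $i$ reduces matters to the combinatorial identity
\[
\sum_{1 \le i < k \le n+1} |x_k - x_i|^2 = (n+1)\|x\|^2 - \Bigl|\sum_{j=1}^{n+1} x_j\Bigr|^2 \le (n+1)\|x\|^2,
\]
proved by expanding the squares and using $2\Re\sum_{i<k} x_k\bar x_i = |\sum_j x_j|^2 - \sum_j |x_j|^2$. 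Together these yield $\|T_c\|^2 \le (n+1)\|d\|^2$.

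For part (iii), set $e_l := c_{l+2} - 2c_{l+1} + c_l = d_{l+1} - d_l$. Since $c_k \to 0$ forces $d_k \to 0$, iterating the telescope $d_k = -\sum_{l \ge k} e_l$ gives $c_k = \sum_{l \ge k}(l-k+1)e_l$, so $c = \sum_l e_l c^{(l)}$ where $c^{(l)}_k := (l-k+1)_+$. Each $c^{(l)}$ is supported on $\{1,\dots,l\}$ with all consecutive differences equal to $-1$, so part (ii) with $n=l$ gives $\|T_{c^{(l)}}\|^2 \le l(l+1)$. Assuming the stated right-hand side is finite (else there is nothing to prove), the series $\sum_l e_l T_{c^{(l)}}$ converges absolutely in operator norm; entry-wise inspection using $c = \sum_l e_l c^{(l)}$ identifies its limit as $T_c$, and the triangle inequality finishes the estimate. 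The main obstacle is extracting the sharp factor $(n+1)$ in (ii): a naive bound $|x_{k+1}-x_i|^2 \le 2(|x_{k+1}|^2+|x_i|^2)$ leads only to $O(n)\|d\|^2$ with a strictly larger constant; the symmetric pair-difference identity captures the optimal coefficient, and this sharpness is precisely what propagates to the factor $\sqrt{l(l+1)}$ in part (iii).
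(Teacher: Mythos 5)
Your proof is correct and follows essentially the same route as the paper's: the same diagonal-plus-difference splitting and column evaluation $\|T_ce_k\|^2=|c_k|^2+(k-1)|c_k-c_{k-1}|^2$ in (i), the identical rewriting $(T_cx)_i=\sum_{k= i}^{n}d_k(x_{k+1}-x_i)$ followed by Cauchy--Schwarz and the pair-difference identity in (ii), and the same superposition over the rescaled Fej\'er kernels $c^{(l)}=(l+1)K_l$ in (iii), with your telescoping identification of the limit replacing the paper's second-order difference-equation argument. The only substantive variation is in (i), where you bound the off-diagonal part by a Schur test with weights $i^{-1/2}$ rather than dominating it entrywise by the Ces\`aro matrix and quoting $\|C\|=2$; this is a self-contained substitute for the same estimate.
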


Theorems~\ref{T:main} and~\ref{T:Tc} lead  to the following corollary.

\begin{corollary}\label{C:necsuff}
For $h(z):=\sum_{k=0}^\infty c_kz^k$ to be a Hadamard multiplier of $\cD_\omega$ 
for all superharmonic weights $\omega$:
\begin{itemize}
\item a necessary condition is that $c_k=O(1)$ and $(c_k-c_{k-1})=O(1/\sqrt{k})$,
\item a sufficient condition is that $c_k=O(1)$ and $(c_k-c_{k-1})=O(1/k)$.
\end{itemize}
\end{corollary}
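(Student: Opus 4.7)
The plan is to combine Theorem~\ref{T:main} with the two-sided estimate in Theorem~\ref{T:Tc}(i); the corollary is essentially a direct read-off of these, so there is no serious obstacle to overcome. First I would invoke Theorem~\ref{T:main} to reformulate the problem: $h$ is a Hadamard multiplier of $\cD_\omega$ for every superharmonic weight $\omega$ if and only if $T_h$ acts as a bounded operator on $\ell^2$. Thus the whole task reduces to characterizing the sequences $c=(c_k)_{k\ge1}$ for which $\|T_c\|<\infty$, sandwiched between the two inequalities given in Theorem~\ref{T:Tc}(i).

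For the necessary direction, I would apply the lower bound
\[
\|T_c\|\ge\sup_{k\ge1}\bigl\{|c_k|^2+(k-1)|c_k-c_{k-1}|^2\bigr\}^{1/2}.
\]
Assuming $\|T_c\|<\infty$, each term under the supremum is bounded by $\|T_c\|^2$, which immediately yields $|c_k|\le \|T_c\|$ and $(k-1)|c_k-c_{k-1}|^2\le \|T_c\|^2$. The first gives $c_k=O(1)$, and rearranging the second gives $|c_k-c_{k-1}|=O(1/\sqrt{k-1})=O(1/\sqrt{k})$, as required.

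For the sufficient direction, I would apply the upper bound
\[
\|T_c\|\le \sup_{k\ge1}|c_k|+2\sup_{k\ge2}k|c_k-c_{k-1}|.
\]
Under the hypothesis $c_k=O(1)$ and $c_k-c_{k-1}=O(1/k)$, both suprema on the right-hand side are finite, so $\|T_c\|<\infty$, and $h$ is a Hadamard multiplier of every $\cD_\omega$ by Theorem~\ref{T:main}. No delicate estimation is required, since Theorem~\ref{T:Tc}(i) already packages the quantitative work; the only point worth stressing in the write-up is the contrast between the $1/\sqrt{k}$ rate in the necessary condition and the $1/k$ rate in the sufficient one, which reflects the gap between the two bounds in Theorem~\ref{T:Tc}(i).
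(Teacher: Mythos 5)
Your proposal is correct and follows exactly the paper's route: the paper's own proof is the one-line remark that the corollary ``follows directly from Theorems~\ref{T:main} and~\ref{T:Tc}\,(i)'', and your write-up simply makes the read-off explicit (lower bound for necessity, upper bound for sufficiency). No issues.
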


As a further application of Theorems~\ref{T:main} and~\ref{T:Tc},
we obtain sharp estimates for the Dirichlet, Fej\'er and de la Vall\'ee Poussin kernels
as Hadamard multipliers of $\cD_\omega$.

\begin{corollary}\label{C:kernels}
Let $n\ge0$ and let $\omega$ be a superharmonic weight.
\begin{enumerate}[\normalfont (i)]
\item If $D_n(z):=\sum_{k=0}^n z^k$, then
\[
\cD_\omega(D_n*f)\le (n+1)\cD_\omega(f) \quad(f\in\cD_\omega).
\]
\item If $K_n(z):=\sum_{k=0}^n(1-k/(n+1))z^k$, then
\[
\cD_\omega(K_n*f)\le \frac{n}{n+1}\cD_\omega(f) \quad(f\in\cD_\omega).
\]
\item If $V_n(z):=\sum_{k=0}^{n-1} z^k+\sum_{k=n}^{2n-1}(2-k/n)z^k$, then
\[
\cD_\omega(V_n*f)\le 2\cD_\omega(f) \quad(f\in\cD_\omega).
\]
\end{enumerate}
Moreover, there exists a superharmonic weight $\omega$ such that,
for each $n\ge0$, 
the constants are best possible in (i), (ii) and (iii).
\end{corollary}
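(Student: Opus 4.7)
\emph{Proof plan.} The plan is to apply Theorem~\ref{T:main} together with Theorem~\ref{T:Tc}(ii) to obtain the upper bounds, and then to exhibit a single superharmonic weight---a boundary Poisson kernel---for which all three inequalities are simultaneously sharp.

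Each kernel $D_n$, $K_n$, $V_n$ is a polynomial, so Theorem~\ref{T:Tc}(ii) applies. For $D_n$ the coefficient sequence has only one nonzero first difference, $c_{n+1}-c_n=-1$, whence $\|T_{D_n}\|^2 \le (n+1)\cdot 1 = n+1$. For $K_n$ each of the $n$ first differences $c_{k+1}-c_k$ (for $k=1,\dots,n$) equals $-1/(n+1)$, whence $\|T_{K_n}\|^2 \le (n+1)\cdot n/(n+1)^2 = n/(n+1)$. For $V_n$, of degree $2n-1$, there are $n$ first differences of magnitude $1/n$, whence $\|T_{V_n}\|^2 \le (2n)\cdot n/n^2 = 2$. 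Theorem~\ref{T:main} then delivers the three displayed inequalities for any superharmonic weight $\omega$.

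For sharpness I would take $\omega = P_\zeta(z) := (1-|z|^2)/|1-\bar\zeta z|^2$ for some fixed $\zeta\in\TT$; this is positive and harmonic, hence superharmonic. For $f(z)=\sum a_k z^k$, either a direct computation or Richter's identity $\cD_{P_\zeta}(f) = \int_\TT |(f(\eta)-f(\zeta))/(\eta-\zeta)|^2\,dm(\eta)$ gives $\cD_{P_\zeta}(f) = \sum_{j\ge 0}|A_j|^2$, where $A_j:=\sum_{k>j} a_k\zeta^{k-1-j}$ are the Taylor coefficients of the difference quotient. Thus $f\mapsto(A_j)$ is an isometry of $\cD_{P_\zeta}/\CC$ onto $\ell^2$, and every $\ell^2$-sequence arises this way. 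An Abel-summation argument then identifies the Hadamard-multiplier action $f\mapsto h*f$ with the action of $T_h$ on $(\zeta^{j+1}A_j)_{j\ge 0}$, up to a one-place re-indexing; explicitly, $\zeta^{j+1}A_j(h*f) = c_{j+1}\,\zeta^{j+1}A_j(f) + \sum_{k>j}(c_{k+1}-c_k)\,\zeta^{k+1}A_k(f)$. Hence
\[
\sup_{f\in\cD_{P_\zeta}}\frac{\cD_{P_\zeta}(h*f)}{\cD_{P_\zeta}(f)} = \|T_h\|^2
\]
for \emph{every} formal power series $h$, so the single weight $P_\zeta$ simultaneously witnesses sharpness for all polynomial multipliers.

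It remains to verify that the upper bounds coincide with the exact norms $\|T_h\|^2$ for $h=D_n, K_n, V_n$. The case $D_n$ is immediate: restricted to its nonzero block, $T_{D_n}T_{D_n}^* = I_n + J_n$, with $J_n$ the $n\times n$ all-ones matrix, whose largest eigenvalue is $n+1$. For $K_n$, setting $\alpha:=1/(n+1)$, a direct calculation using the cancellation $(n+1-j)\alpha^2 - \alpha(1-j\alpha) = 0$ shows $T_{K_n}T_{K_n}^*$ to be diagonal with entries $(n+1-i)(n+2-i)/(n+1)^2$, attaining its maximum $n/(n+1)$ at $i=1$. The $V_n$ case is the main technical step: the two-block coefficient profile (the plateau $c_k=1$ for $k\le n$ and the descent $c_k=2-k/n$ for $n\le k\le 2n-1$) makes $T_{V_n}T_{V_n}^*$ less symmetric, and the top eigenvalue $2$ must be extracted by a direct but somewhat longer computation; I expect this step to require the most care.
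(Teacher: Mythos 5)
Your derivation of the three upper bounds is exactly the paper's: apply Theorem~\ref{T:Tc}\,(ii) to each kernel and feed the result into Theorem~\ref{T:main}; your difference counts ($1$ difference of size $1$ for $D_n$, $n$ of size $1/(n+1)$ for $K_n$, $n$ of size $1/n$ for $V_n$ with the factor $2n$ from the degree $2n-1$) are all correct. For sharpness you take a genuinely different route. The paper exhibits explicit extremal functions in the single local space $\cD_1$ --- for instance $f(z)=z^{2n}-2z^n+1=(z-1)g(z)$ for $V_n$ --- and computes both $\cD_1(f)$ and $\cD_1(h*f)$ as $H^2$ norms via Theorem~\ref{T:localD}, so that equality in each inequality is verified by a two-line calculation and no exact knowledge of $\|T_h\|$ is needed. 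You instead invoke the fact that for a boundary local weight $\cD_\zeta$ ($\zeta\in\TT$) the multiplier norm equals $\|T_h\|^2$ exactly for every $h$ --- which is legitimate, and is essentially the content of the paper's proof of (i)$\Rightarrow$(ii) of Theorem~\ref{T:main} --- and then compute $\|T_h\|$ for each kernel by analyzing $T_hT_h^*$. Your computations for $D_n$ (Gram matrix $I_n+J_n$) and $K_n$ (diagonal, via the cancellation you note) are correct and arguably more illuminating than the paper's, since they give the full spectrum.

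The gap is the $V_n$ case: you assert that the top eigenvalue of $T_{V_n}T_{V_n}^*$ is $2$ but defer the computation, and that equality $\|T_{V_n}\|^2=2$ is precisely the content of the sharpness claim for (iii), so as written the proof of that part is incomplete. You do not need to diagonalize anything, though: since the upper bound already gives $\|T_{V_n}\|^2\le 2$, it suffices to produce one vector $\xi\in\ell^2$ with $\|T_{V_n}\xi\|^2=2\|\xi\|^2$. The vector $\xi=(-1,\dots,-1,1,\dots,1,0,0,\dots)$ with $n$ entries $-1$ followed by $n$ entries $+1$ works (it is exactly the coefficient vector of the $g$ in the paper's extremal function $f=(z-1)g$), and checking $\|T_{V_n}\xi\|^2=4n=2\|\xi\|^2$ is a short direct calculation. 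With that one addition your argument closes; the same remark would also let you replace the eigenvalue computations for $D_n$ and $K_n$ by single test vectors if you preferred brevity over the extra spectral information.
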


Another application of Theorems~\ref{T:main} and \ref{T:Tc} is to
radial dilates. Given $f\in\hol(\DD)$ and $r\in[0,1)$, 
we define $f_r\in\hol(\DD)$ by $f_r(z):=f(rz)$. Notice that $f_r=P_r*f$, where
$P_r$ is the Poisson kernel, given by $P_r(z):=\sum_{k=0}^\infty r^kz^k$.
We obtain the following corollary.

\begin{corollary}\label{C:dilate}
Let $\omega$ be a superharmonic weight and let $f\in\cD_\omega$.
Then  $f_r\in\cD_\omega$ for all $r\in[0,1)$ and
\begin{equation}\label{E:dilate}
\cD_\omega(f_r)\le r^2(2-r)\cD_\omega(f) 
\quad(0\le r<1).
\end{equation}
\end{corollary}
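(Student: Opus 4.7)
The plan is to write $f_r$ as a Hadamard product and appeal to Theorems \ref{T:main} and \ref{T:Tc}. Since $f_r = P_r * f$ with $P_r(z) = \sum_{k \ge 0} r^k z^k$, once we know that $T_{P_r}$ is bounded on $\ell^2$, Theorem \ref{T:main} furnishes both that $f_r \in \cD_\omega$ and the bound $\cD_\omega(f_r) \le \|T_{P_r}\|^2 \cD_\omega(f)$. The whole matter thus reduces to the operator-norm estimate $\|T_{P_r}\|^2 \le r^2(2-r)$.

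To attack this I would invoke Theorem \ref{T:Tc}(iii), which applies because $c_k := r^k \to 0$. A direct computation of the second differences gives
\[
c_{k+2} - 2c_{k+1} + c_k = r^k(r-1)^2 = (1-r)^2 r^k,
\]
and hence
\[
\|T_{P_r}\| \le (1-r)^2 \sum_{k=1}^\infty \sqrt{k(k+1)}\, r^k.
\]

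The last, and only mildly non-trivial, step is controlling this series. The factor $(2-r)$ in the statement of the corollary is the key clue: applying the Cauchy--Schwarz inequality with the split
\[
\sum_{k \ge 1} \sqrt{k(k+1)}\, r^k = \sum_{k \ge 1} \sqrt{k r^k}\,\sqrt{(k+1)r^k} \le \Bigl(\sum_{k \ge 1} k r^k\Bigr)^{1/2}\Bigl(\sum_{k \ge 1}(k+1)r^k\Bigr)^{1/2},
\]
and using the elementary identities $\sum_{k\ge 1} k r^k = r/(1-r)^2$ and $\sum_{k\ge 1}(k+1)r^k = r(2-r)/(1-r)^2$, one obtains exactly
\[
\sum_{k\ge 1} \sqrt{k(k+1)}\, r^k \le \frac{r\sqrt{2-r}}{(1-r)^2}.
\]
Combining yields $\|T_{P_r}\| \le r\sqrt{2-r}$, hence $\|T_{P_r}\|^2 \le r^2(2-r)$, and the corollary follows.

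The only genuinely non-routine step is guessing the correct pairing for the Cauchy--Schwarz inequality, but the appearance of $(2-r)$ in the statement practically dictates the split $\sqrt{k(k+1)} = \sqrt{k}\cdot\sqrt{k+1}$; everything else is a mechanical verification.
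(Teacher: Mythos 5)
Your proof is correct and follows exactly the paper's argument: write $f_r=P_r*f$, apply Theorems~\ref{T:main} and~\ref{T:Tc}\,(iii) to get $\|T_{P_r}\|\le(1-r)^2\sum_{k\ge1}\sqrt{k(k+1)}\,r^k$, and then use the same Cauchy--Schwarz split $\sqrt{k}\cdot\sqrt{k+1}$ to obtain $\|T_{P_r}\|\le r\sqrt{2-r}$. No gaps; all the identities check out.
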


Inequalities of the type $\cD_\omega(f_r)\le C\cD_\omega(f)$ have been studied by several authors,
with the value of $C$ being improved over time. Here is a brief summary of the history:
\begin{itemize}
\item $C=4$ for harmonic weights $\omega$ (Richter and Sundberg~\cite{RS91}).
\item $C=5/2$ for superharmonic weights $\omega$ (Aleman \cite{Al93}).
\item $C=2r/(1+r)\le1$ for harmonic  $\omega$  (Sarason \cite{Sa97}).
\item $C=2r/(1+r)\le1$ for superharmonic  $\omega$  (El-Fallah et al \cite{EKKMR16}).
\end{itemize}
In the last two cases, the method used was to identify certain $\cD_\omega$ 
with an appropriate de Branges--Rovnyak space, and
prove the desired inequality in that space. Our proof of \eqref{E:dilate} is direct,
and our constant $C$ is better.

One reason for studying inequalities of the type \eqref{E:dilate}
is that they can be used to prove that polynomials are dense in $\cD_\omega$.
This fact was originally established by Richter \cite{Ri91} (for harmonic $\omega$)
and  Aleman \cite{Al93} (for general superharmonic $\omega$) using a different technique,
based on a certain type of wandering-subspace theorem. 

We end this section by stating an analogue of Fej\'er's theorem for $\cD_\omega$,
which yields another, direct proof of the density of polynomials in $\cD_\omega$.
Given a power series $f(z):=\sum_{k=0}^\infty a_kz^k$, we write
\[
s_n(f)(z):=\sum_{k=0}^n a_kz^k
\quad\text{and}\quad
\sigma_n(f)(z):=\frac{1}{n+1}\sum_{k=0}^n s_k(f)(z).
\]

\begin{theorem}\label{T:Fejer}
\begin{enumerate}[\normalfont(i)]
\item If $\omega$ is a superharmonic weight and if $f\in\cD_\omega$, 
then $\|\sigma_n(f)-f\|_{\cD_\omega}\to0$
as $n\to\infty$.
\item There exist a superharmonic weight and a function $f\in\cD_\omega$ 
such that $\|s_n(f)-f\|_{\cD_\omega}\not\to0$ as $n\to\infty$.
\end{enumerate}
\end{theorem}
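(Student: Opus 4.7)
The strategy for part (i) is to combine a uniform operator bound for the $\sigma_n$ with strong convergence on a dense family of smooth functions. Observe first that $\sigma_n(f) = K_n * f$, where $K_n$ is the Fej\'er kernel of Corollary~\ref{C:kernels}(ii); that corollary therefore gives $\cD_\omega(\sigma_n f) \le \frac{n}{n+1}\cD_\omega(f)$, and since $\sigma_n(f)(0) = f(0)$ this upgrades to the uniform bound $\|\sigma_n\|_{\cD_\omega \to \cD_\omega} \le 1$. A standard three-$\epsilon$ argument then reduces the conclusion to proving $\sigma_n f_r \to f_r$ in $\cD_\omega$ for each dilate $f_r$, provided that the dilates approximate $f$, that is, $f_r \to f$ in $\cD_\omega$ as $r \to 1^-$.

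I expect the main difficulty to lie in this dilate-approximation step. The plan is to deduce it from Corollary~\ref{C:dilate} together with the Hilbert-space structure of $\cD_\omega$. The estimate $\cD_\omega(f_r) \le r^2(2-r)\cD_\omega(f)$ gives $\limsup_{r \to 1}\cD_\omega(f_r) \le \cD_\omega(f)$, while the pointwise convergence $rf'(rz) \to f'(z)$ combined with Fatou's lemma gives the matching lower bound; together with $f_r(0) = f(0)$ this forces $\|f_r\|_{\cD_\omega} \to \|f\|_{\cD_\omega}$. Since $\cD_\omega$ is a reproducing kernel Hilbert space (point evaluations are continuous because $\omega$ is bounded below on compact subsets of $\DD$), the pointwise convergence $f_r(z) \to f(z)$ together with this norm bound forces weak convergence $f_r \rightharpoonup f$; in a Hilbert space, weak convergence plus convergence of norms implies strong convergence.

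Once the dilate approximation is in hand, the argument concludes as follows. For each fixed $r < 1$ the function $f_r$ extends analytically to $\{|z| < 1/r\}$, so its Taylor coefficients decay geometrically and $\sigma_n(f_r)$ converges to $f_r$ together with its derivative uniformly on $\overline{\DD}$. Combined with the integrability $\int_\DD \omega\, dA \le \omega(0) < \infty$ (which is obtained by integrating the super-mean-value inequality over circles), this yields $\sigma_n(f_r) \to f_r$ in $\cD_\omega$. The three-$\epsilon$ argument then closes part~(i).

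For part (ii), I would invoke the last clause of Corollary~\ref{C:kernels}: there exists a superharmonic weight $\omega$ for which the constant $n+1$ is best possible in $\cD_\omega(D_n * f) \le (n+1)\cD_\omega(f)$ for every $n \ge 0$. Restricting the supremum to functions with $f(0) = 0$ shows $\|s_n\|_{\cD_\omega \to \cD_\omega}^2 \ge n+1$, so the operator norms of the partial-sum projections $s_n$ are unbounded. The Banach--Steinhaus theorem then produces $f \in \cD_\omega$ with $\sup_n \|s_n f\|_{\cD_\omega} = \infty$, and for such $f$ one cannot have $s_n f \to f$ in $\cD_\omega$.
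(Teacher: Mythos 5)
Your part~(ii) is essentially the paper's own argument: operator norms $\|s_n\|\ge\sqrt{n+1}$ for the weight $\cD_1$ (via the sharpness examples for the Dirichlet kernel) plus Banach--Steinhaus. Your part~(i), however, takes a genuinely different route, and it is correct. The paper proves (i) with a short parallelogram-law trick: $\cD_\omega(\sigma_n(f)-f)+\cD_\omega(\sigma_n(f)+f)=2\cD_\omega(\sigma_n(f))+2\cD_\omega(f)$, together with the bound $\cD_\omega(\sigma_n(f))\le\cD_\omega(f)$ from Corollary~\ref{C:kernels}\,(ii) and Fatou's lemma applied to $\sigma_n(f)+f\to 2f$ locally uniformly, which forces $\limsup_n\cD_\omega(\sigma_n(f)-f)\le 0$ with no appeal to any dense subclass. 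You instead run the classical uniform-boundedness-plus-dense-subclass argument: $\|\sigma_n\|\le 1$, convergence of $\sigma_n(f_r)$ to $f_r$ for each fixed $r<1$, and the approximation $f_r\to f$ in $\cD_\omega$, which you extract from Corollary~\ref{C:dilate} (giving $\limsup_r\cD_\omega(f_r)\le\cD_\omega(f)$), Fatou (giving the matching $\liminf$), and the RKHS fact that bounded pointwise convergence plus convergence of norms yields strong convergence. All of these steps check out: $\omega$ is strictly positive and locally bounded below, so point evaluations are indeed continuous, and $\omega\in L^1(\DD)$ makes the uniform convergence of $(\sigma_n(f_r))'$ on $\overline{\DD}$ sufficient for convergence in $\cD_\omega$. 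What the paper's route buys is brevity and self-containment — it needs only Corollary~\ref{C:kernels}\,(ii) and Fatou, which is why the authors stress that their proof yields the density of polynomials as a corollary rather than assuming anything close to it. What your route buys is the intermediate result $\|f_r-f\|_{\cD_\omega}\to 0$ as $r\to1^-$, a statement of independent interest that the paper does not record (it only bounds $\cD_\omega(f_r)$), and your derivation of it is not circular since Corollary~\ref{C:dilate} rests on Theorems~\ref{T:main} and~\ref{T:Tc}, not on density of polynomials. Note, incidentally, that your norm-convergence-plus-Fatou scheme for the dilates is exactly the mechanism the paper deploys directly on $\sigma_n(f)$, repackaged through the parallelogram identity.
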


Part~(ii) was already known (see e.g.\ \cite[Exercise~7.3.2]{EKMR14}).
We include it to complement part~(i), 
which we believe to be new. 
 
The rest of the paper is structured as follows. 
In \S\ref{S:superharmonic} we review some basic properties of Dirichlet spaces 
with superharmonic weights. The theorems and corollaries stated above are proved
in  \S\S\ref{S:main}--\ref{S:Fejer}.
We conclude in \S\ref{S:conclusion} with some remarks and questions.


\section{Background on superharmonic weights}\label{S:superharmonic}

Let $\omega$ be a positive superharmonic function on $\DD$.
In this section we summarize some basic properties
of the weighted Dirichlet space $\cD_\omega$.
For detailed proofs and further information, we refer to \cite{Al93}.

By standard results from potential theory,
 $\omega$ is locally integrable on~$\DD$,
and $(1/r^2)\int_{|z|\le r}\omega\,dA$ is a decreasing 
function of $r$ for $0<r<1$
(see e.g.\ \cite[Theorems 2.5.1 and 2.6.8]{Ra95}. 
It follows that $\omega\in L^1(\DD)$,
and thus $\cD_\omega$ contains the polynomials.

As $\omega$ is a positive superharmonic function, 
there exists a unique positive finite Borel measure $\mu$ 
on $\overline{\DD}$ such that, for all $z\in\DD$,
\begin{equation}\label{E:murep}
\omega(z)=\int_\DD \log\Bigl|\frac{1-\overline{\zeta}z}{\zeta-z}\Bigr|\frac{2}{1-|\zeta|^2}\,d\mu(\zeta)
+\int_\TT\frac{1-|z|^2}{|\zeta-z|^2}\,d\mu(\zeta)
\end{equation}
(see e.g.\ \cite[Theorem~4.5.1]{Ra95}).
When $\mu=\delta_\zeta$, we write $\cD_\zeta$ for $\cD_\omega$.
Thus, for $f\in\hol(\DD)$, 
\[
\cD_\zeta(f)=
\begin{cases}
\displaystyle\int_\DD \log\Bigl|\frac{1-\overline{\zeta}z}{\zeta-z}\Bigr|\frac{2}{1-|\zeta|^2}|f'(z)|^2\,dA(z), &\zeta\in\DD,\\
\displaystyle\int_\DD\frac{1-|z|^2}{|\zeta-z|^2}|f'(z)|^2\,dA(z), &\zeta\in\TT.
\end{cases}
\]
For general $\omega$, corresponding to a measure $\mu$ on $\overline{\DD}$,
we can recover $\cD_\omega(f)$  from $\cD_\zeta(f)$
via Fubini's theorem: 
\begin{equation}\label{E:average}
\cD_\omega(f)=\int_{\overline{\DD}}\cD_\zeta(f)\,d\mu(\zeta).
\end{equation}

The spaces $\cD_\zeta$ are sometimes called \emph{local Dirichlet spaces}.
The following result gives a very simple alternative description of them.
Recall that $H^2$ denotes the Hardy space.

\begin{theorem}\label{T:localD}
Let $\zeta\in\overline{\DD}$. Then $f\in\cD_\zeta$ if and only if $f(z)=a+(z-\zeta)g(z)$, 
where $g\in H^2$ and $a\in\CC$. In this case $\cD_\zeta(f)=\|g\|_{H^2}^2$.
\end{theorem}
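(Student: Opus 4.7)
My plan is to treat the cases $\zeta\in\DD$ and $\zeta\in\TT$ separately, with the interior case providing the main machinery.

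For $\zeta\in\DD$, I would first handle $\zeta=0$, where $\omega_0(z)=2\log(1/|z|)$ and the classical Littlewood--Paley identity
\[
\|F\|_{H^2}^2 = |F(0)|^2 + 2\int_\DD |F'(z)|^2\log(1/|z|)\,dA(z)
\]
immediately yields $\cD_0(f) = \|f-f(0)\|_{H^2}^2 = \|g\|_{H^2}^2$ with $g(z):=(f(z)-f(0))/z$. For a general $\zeta\in\DD$, I would reduce to $\zeta=0$ via the disc automorphism $\psi(w):=(w+\zeta)/(1+\bar\zeta w)$, which satisfies $\psi(0)=\zeta$. The Green-type weight $\log|(1-\bar\zeta z)/(\zeta-z)|$ pulls back to $\log(1/|w|)$, and $|f'(z)|^2\,dA(z)$ is conformally invariant, so with $F:=f\circ\psi$ Littlewood--Paley gives $\cD_\zeta(f) = \|F-F(0)\|_{H^2}^2/(1-|\zeta|^2)$. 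Using $\psi(w)-\zeta = w(1-|\zeta|^2)/(1+\bar\zeta w)$, one writes $F(w)-F(0)=w\cdot h(w)$ with $h(w):=(1-|\zeta|^2)g(\psi(w))/(1+\bar\zeta w)$, and a change of variable on $\TT$ exploiting $|\psi'(e^{i\theta})|=(1-|\zeta|^2)/|1+\bar\zeta e^{i\theta}|^2$ converts $\|h\|_{H^2}^2$ into $(1-|\zeta|^2)\|g\|_{H^2}^2$, completing the identity.

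For $\zeta\in\TT$, I would pass to the limit from the interior case by taking $\zeta_n:=(1-1/n)\zeta$. The identity
\[
\log\Bigl|\frac{1-\bar\zeta_n z}{\zeta_n-z}\Bigr| = \tfrac{1}{2}\log\Bigl(1+\frac{(1-|\zeta_n|^2)(1-|z|^2)}{|\zeta_n-z|^2}\Bigr),
\]
combined with $\log(1+x)\le x$, shows that the normalized weights satisfy $\omega_{\zeta_n}(z)\le (1-|z|^2)/|\zeta_n-z|^2$, and they converge pointwise on $\DD$ to the Poisson kernel $(1-|z|^2)/|\zeta-z|^2$. A careful argument (Fatou's lemma for one direction, and dominated convergence on $\{|z-\zeta|\ge 2/n\}$ together with a small-set estimate near $\zeta$ for the other) gives $\cD_{\zeta_n}(f)\to\cD_\zeta(f)$. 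By the interior case, $\cD_{\zeta_n}(f)=\|g_n\|_{H^2}^2$ where $g_n(z):=(f(z)-f(\zeta_n))/(z-\zeta_n)$; if this sequence of norms stays bounded, $\{g_n\}$ is bounded in $H^2$, and weak compactness together with the identity $(z-\zeta_n)g_n(z)=f(z)-f(\zeta_n)$ produces a unique weak limit $g\in H^2$ and a constant $a=\lim_n f(\zeta_n)$ satisfying $f(z)=a+(z-\zeta)g(z)$, with $\|g\|_{H^2}^2 = \cD_\zeta(f)$.

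The main obstacle I foresee is the boundary case: one must deduce from finiteness of $\cD_\zeta(f)$ alone the existence of a well-defined value $f(\zeta)$ and the $H^2$-membership of the difference quotient. The integral condition must be leveraged to produce this boundary regularity, rather than assumed a priori, and it is this direction of the equivalence that requires the most delicate argument.
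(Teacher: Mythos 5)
First, a point of comparison: the paper does not prove Theorem~\ref{T:localD} at all --- it is quoted as background, with the proof deferred to \cite[Chapter~IV, \S1]{Al93} --- so your proposal can only be measured against the standard external arguments. Your treatment of the interior case is correct and is essentially the standard one: conformal invariance of the Green potential reduces matters to $\zeta=0$, where the Littlewood--Paley identity applies, and the M\"obius change of variables on $\TT$ converts $\|F-F(0)\|_{H^2}^2$ into $(1-|\zeta|^2)\|g\|_{H^2}^2$. (One should only add that these identities hold in $[0,\infty]$, so both implications of the equivalence fall out of the same computation.)

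The boundary case, however, contains a genuine gap, and it sits exactly where you flag difficulty. Every step of your limiting argument --- the boundedness of $\|g_n\|_{H^2}$ needed to invoke weak compactness, and the exact identity $\cD_\zeta(f)=\|g\|_{H^2}^2$ --- reduces to the inequality $\limsup_n\cD_{\zeta_n}(f)\le\cD_\zeta(f)$, and this is precisely the ``small-set estimate near $\zeta$'' that you assert but do not supply. It cannot be obtained by domination as proposed: the majorant $(1-|z|^2)/|z-\zeta_n|^2$ is not locally area-integrable near its pole $\zeta_n\in\DD$, and the normalized weight $\omega_{\zeta_n}$ itself has a logarithmic singularity at $\zeta_n$ where the limit weight $P_\zeta(z)=(1-|z|^2)/|\zeta-z|^2$ stays finite, so no bound $\omega_{\zeta_n}\le C\,P_\zeta$ holds on the shrinking disks $\{|z-\zeta|<2/n\}$; meanwhile the hypothesis $\int_\DD|f'|^2P_\zeta\,dA<\infty$ gives no pointwise control of $|f'|^2$ on those disks, which is where the mass of $\omega_{\zeta_n}$ concentrates. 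A second, smaller defect: even granting $\cD_{\zeta_n}(f)\to\cD_\zeta(f)$, weak compactness only yields $\|g\|_{H^2}^2\le\liminf_n\|g_n\|_{H^2}^2$, so the exact norm identity requires upgrading to norm convergence $g_n\to g$ in $H^2$ --- which is true, but whose natural proof already uses the decomposition $f=a+(z-\zeta)g$ that you are trying to establish. The standard proofs (Richter--Sundberg, Aleman, and \cite[Chapter~7]{EKMR14}) avoid this circularity by computing $\cD_\zeta(f)$ for $\zeta\in\TT$ directly, e.g.\ via the local Douglas formula $\cD_\zeta(f)=\int_\TT|(f(\lambda)-f(\zeta))/(\lambda-\zeta)|^2\,|d\lambda|/(2\pi)$, established by a Fourier--Fubini computation with the Poisson kernel, rather than by passing to the limit from interior points; I would recommend rebuilding the boundary case along those lines.
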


Thus, if $\zeta\in\DD$, then $\cD_\zeta$ is just $H^2$ with a different (but equivalent) norm.
But if $\zeta\in\TT$, then $\cD_\zeta$ is a proper subspace of $H^2$.
For a proof of this result, see \cite[Chapter~IV, \S1]{Al93}.


\section{Proof of Theorem~\ref{T:main}}\label{S:main}

\begin{proof}[Proof that (ii)$\Rightarrow$(i)]
Let $h(z):=\sum_{k=0}^\infty c_kz^k$ be a power series such that $\|T_h\|<\infty$. 
It  follows that the sequence $(c_{k+1}-c_{k})_{k\ge1}$ belongs to $\ell^2$.

Let $\zeta\in\overline{\DD}$, and let $f\in\cD_\zeta$.
By Theorem~\ref{T:localD}, there exist $g\in H^2$ and $a\in\CC$
such that $f(z)=a+(z-\zeta)g(z)$, and $\cD_\zeta(f)=\|g\|_{H^2}^2$.
Writing $f(z):=\sum_{k=0}^\infty a_kz^k$ and $g(z):=\sum_{k=0}^\infty b_kz^k$,  
and equating coefficients of $z^k$ in the resulting formula, we obtain the relations
\begin{equation}\label{E:coeffs}
\begin{cases}
a_0&=a- b_0\zeta,\\
a_k&=b_{k-1}- b_k\zeta \quad(k\ge1).
\end{cases}
\end{equation}

As both the sequences $(b_k)_{k\ge1}$ and  $(c_{k+1}-c_k)_{k\ge1}$ belong to $\ell^2$,
the series $\sum_{k=1}^\infty (c_{k+1}-c_k)b_k$ converges absolutely,
and it thus makes sense to define a formal power series $F$ by
\[
F(z):=\sum_{j=0}^\infty \Bigl(c_{j+1}b_j+\sum_{k=j+1}^\infty (c_{k+1}-c_k)b_k\zeta^{k-j}\Bigr)z^j.
\]
If we multiply this power series by $(z-\zeta)$, then we obtain a new power series whose $j$-th coefficient
(for $j\ge1$) is given by
\begin{align*}
c_jb_{j-1}&+\sum_{k=j}^\infty (c_{k+1}-c_k)b_k\zeta^{k-j+1}
-c_{j+1}b_j\zeta-\sum_{k=j+1}^\infty(c_{k+1}-c_k)b_k\zeta^{k-j+1}\\
&=c_j(b_{j-1}-b_j\zeta).
\end{align*}
In view of the relations \eqref{E:coeffs}, this is exactly equal to $c_ja_j$. In
other words, we have $(h*f)(z)=(z-\zeta)F(z)+A$, where $A$ is a constant.

We claim that $F\in H^2$. Indeed,
\begin{align*}
\|F\|_{H^2}^2
&=\sum_{j=0}^\infty \Bigl|c_{j+1}b_j+\sum_{k=j+1}^\infty (c_{k+1}-c_k)b_k\zeta^{k-j}\Bigr|^2\\
&\le \sup_{|\eta|=1}\sum_{j=0}^{\infty}\Bigl|c_{j+1}b_j+\sum_{k=j+1}^\infty (c_{k+1}-c_k)b_k\eta^{k-j}\Bigr|^2
&&\text{(max.\ principle)}\\
&=\sup_{|\eta|=1}\sum_{j=0}^{\infty}\Bigl|c_{j+1}b_j\eta^j+\sum_{k=j+1}^\infty (c_{k+1}-c_k)b_k\eta^k\Bigr|^2,
\end{align*}
and so, writing $v_\eta:=(b_j\eta^j)_{j\ge0}$, we have
\begin{align*}
\|F\|_{H^2}^2&\le \sup_{|\eta|=1}\|T_h(v_\eta)\|_{\ell^2}^2=\sup_{|\eta|=1}\|T_h\|^2\|v_\eta\|_{\ell^2}^2\\
&=\|T_h\|^2\sum_{k=0}^\infty|b_k|^2=\|T_h\|^2\|g\|_{H^2}^2=\|T_h\|^2\cD_\zeta(f).
\end{align*}

In combination with Theorem~\ref{T:localD}, these observations imply that $h*f\in\cD_\zeta$ and that
\[
\cD_\zeta(h*f)=\|F\|_{H^2}^2\le \|T_h\|^2\cD_\zeta(f).
\]

Now let $\omega$ be a superharmonic weight on $\DD$, 
and let $f\in\cD_\omega$.
Let $\mu$ be the associated measure on $\overline{\DD}$
so that \eqref{E:murep} holds. Formula \eqref{E:average} then gives
\[
\cD_\omega(f)=\int_{\overline{\DD}}\cD_\zeta(f)\,d\mu(\zeta).
\]
Thus $f\in\cD_\zeta$ for $\mu$-almost every $\zeta\in\overline{\DD}$,
and, by what we have proved above, $\cD_\zeta(h*f)\le \|T_h\|^2\cD_\zeta(f)$ for all such $\zeta$. 
Consequently,
\[
\cD_\omega(h*f)
=\int_{\overline{\DD}}\cD_\zeta(h*f)\,d\mu(\zeta)
\le \|T_h\|^2\int_{\overline{\DD}}\cD_\zeta(f)\,d\mu(\zeta)
\le \|T_h\|^2\cD_\omega(f).
\]
Thus $h$ is a Hadamard multiplier of  $\cD_\omega$,
and \eqref{E:main} holds.
\end{proof}

\begin{proof}[Proof that  (i)$\Rightarrow$(ii)]
Consider the closed subspace of $\cD_1$ defined by
\[
\cD_1^0:=\{f\in\cD_1: f(0)=0\}.
\]
Note that $\|f\|_{\cD_1}^2=\cD_1(f)$ for all $f\in\cD_1^0$.
Also, by Theorem~\ref{T:localD}, we have $f\in\cD_1^0$ if and only if
there exists $g\in H^2$ such that $f(z)=g(0)+(z-1)g(z)$, 
and in this case $\cD_1(f)=\|g\|_{H^2}^2$. Thus, if we define $U:H^2\to\cD_1^0$
by 
\[
(Ug)(z):=g(0)+(z-1)g(z) \quad(g\in H^2),
\]
then $U$ is an isometry of $H^2$ onto $\cD_1^0$,
i.e., $U$ is a unitary operator.

Now let $h(z):=\sum_{k=0}^\infty c_kz^k$ be a Hadamard multiplier of $\cD_1$.
Clearly it is also a Hadamard multiplier of $\cD_1^0$.
Define $M_h:\cD_1^0\to\cD_1^0$ by 
\[
M_h(f):=h*f \quad(f\in\cD_1^0).
\]
An application of the closed graph theorem shows that $M_h$
is a bounded linear map of $\cD_1^0$ into itself. Hence
$U^*M_hU$ is a bounded linear map of $H^2$ into itself.
We shall show that the matrix of this map with respect to the standard 
orthonormal basis $\{1,z,z^2,\dots\}$ of $H^2$ is exactly $T_h$.

Fix $k\ge0$.  Then, identifying $H^2$ with $\ell^2$ in the standard way, we have
\begin{align*}
UT_h(z^k)
&=U\Bigl((c_{k+1}-c_k)(1+z+\dots+ z^{k-1})+c_{k+1}z^k\Bigr)\\
&=(c_{k+1}-c_k)+(c_{k+1}-c_k)(z^k-1)+c_{k+1}(z^{k+1}-z^k)\\
&=c_{k+1}z^{k+1}-c_kz^k=M_h(z^{k+1}-z^k)=M_hU(z^k).
\end{align*}
Thus $U^*M_hU(z^k)=T_h(z^k)$ for all $k\ge0$,
as claimed.
 
It follows that $T_h$ acts as a bounded linear operator on $\ell^2$. 
Also, we have
\[
\|T_h\|^2=\|U^*M_hU\|^2=\|M_h\|^2=\sup\{\cD_1(f): f\in\cD_1^0,~\cD_1(f)=1\},
\]
which shows that the constant $\|T_h\|^2$ in \eqref{E:main} is best possible.
\end{proof}

We note in passing the following consequence
of the proof above.

\begin{corollary}
A power series $h$ is a Hadamard multiplier of $\cD_1$
if and only if it is a Hadamard multiplier of $\cD_\omega$ for all superharmonic weights $\omega$.
\end{corollary}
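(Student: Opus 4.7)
The plan is to observe that this corollary is essentially already established inside the proof of Theorem~\ref{T:main}; the content of the corollary is that the local Dirichlet space $\cD_1$ alone is ``universal'' among all $\cD_\omega$ as far as Hadamard multipliers are concerned.

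For the ``only if'' direction, I would simply note that the function $\omega(z):=(1-|z|^2)/|1-z|^2$ is the Poisson kernel at $\zeta=1$, which corresponds in the representation \eqref{E:murep} to the measure $\mu=\delta_1$. This $\omega$ is positive and harmonic on $\DD$, hence a superharmonic weight, and for this particular choice $\cD_\omega=\cD_1$. So if $h$ is a Hadamard multiplier of $\cD_\omega$ for every superharmonic $\omega$, it is a fortiori a Hadamard multiplier of $\cD_1$.

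For the ``if'' direction, the key observation is to re-read the proof of (i)$\Rightarrow$(ii) in Theorem~\ref{T:main}: the entire argument there took place inside $\cD_1^0\subset\cD_1$ and used only the hypothesis that $h$ is a multiplier of $\cD_1$. That argument — closed graph theorem applied to $M_h$ on $\cD_1^0$, conjugation by the unitary $U:H^2\to\cD_1^0$, and the computation $U^\ast M_h U(z^k)=T_h(z^k)$ — runs verbatim under this weaker hypothesis and delivers $\|T_h\|<\infty$. Then invoking the already-proved implication (ii)$\Rightarrow$(i) of Theorem~\ref{T:main} gives that $h$ is a Hadamard multiplier of $\cD_\omega$ for every superharmonic weight $\omega$.

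There is no real obstacle to overcome: both directions are extracted from what has been done. The conceptual point worth highlighting is that the averaging identity \eqref{E:average} realises an arbitrary $\cD_\omega$ as a direct integral of local Dirichlet spaces $\cD_\zeta$, so the single boundary space $\cD_1$ captures enough of the geometry to detect the full family of Hadamard multipliers.
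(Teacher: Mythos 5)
Your proof is correct and follows exactly the paper's intended argument: the corollary is stated there as ``a consequence of the proof above,'' precisely because the implication (i)$\Rightarrow$(ii) of Theorem~\ref{T:main} only ever uses the hypothesis that $h$ multiplies $\cD_1$, while the trivial direction follows from $\cD_1$ being $\cD_\omega$ for the (super)harmonic weight given by the Poisson kernel at $1$. The only blemish is that you have swapped the labels ``if'' and ``only if'' relative to the statement as written, but both implications are correctly identified and proved.
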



\section{Proofs of Theorem~\ref{T:Tc}\,(i) and Corollary~\ref{C:necsuff}}\label{S:Tc(i)}

\begin{proof}[Proof of Theorem~\ref{T:Tc}\,(i)]
We can decompose $T_c$ as
\[
T_c=
\begin{pmatrix}
c_1 &0 &0 &0 &\dots\\
0 &c_1 &0 &0 &\dots\\
0 &0 &c_2 &0 &\dots\\
0 &0 &0 &c_3 &\dots\\
\vdots &\vdots &\vdots &\vdots &\ddots
\end{pmatrix}
+
\begin{pmatrix}
0            &c_2-c_1 &c_3-c_2 &c_4-c_3 &\dots\\
0            &c_2-c_1 &c_3-c_2 &c_4-c_3 &\dots\\
0            &0            &c_3-c_2 &c_4-c_3 &\dots\\
0            &0            &0            &c_4-c_3 &\dots\\
\vdots &\vdots &\vdots &\vdots &\ddots
\end{pmatrix}.
\]
The first matrix has norm at most $\sup_{k\ge1}|c_k|$.
As for the second matrix, the absolute value of each entry is at most
$\sup_{k\ge2}k|c_k-c_{k-1}|$ times the corresponding entry in the Ces\`aro matrix,
\[
C:=
\begin{pmatrix}
1 &1/2 &1/3 &1/4 &\dots\\
0 &1/2 &1/3 &1/4 &\dots\\
0 &0    &1/3 &1/4 &\dots\\
0 &0    &0    &1/4 &\dots\\
\vdots &\vdots &\vdots &\vdots &\ddots
\end{pmatrix}.
\]
It is well known that $\|C\|=2$ (see e.g.\ \cite[pp.~128--129]{BHS65}). It follows that
\[
\|T_c\|\le \sup_{k\ge1}|c_k|+2\sup_{k\ge2}k|c_k-c_{k-1}|.
\]
This is the desired upper bound for $\|T_c\|$. The lower bound is obtained 
by noting that $\|T_c\|\ge \sup_{k\ge1}\|T_c(e_k)\|_{\ell^2}$, 
where $(e_k)$ is the standard unit
vector basis of $\ell^2$.
\end{proof}

\begin{proof}[Proof of Corollary~\ref{C:necsuff}]
This corollary follows directly from Theorems~\ref{T:main} and~\ref{T:Tc}\,(i).
\end{proof}


\section{Proofs of Theorem~\ref{T:Tc}\,(ii) and Corollary~\ref{C:kernels}}\label{S:Tc(ii)}

\begin{proof}[Proof of Theorem~\ref{T:Tc}\,(ii)]
Let $n\ge1$, and suppose that $c_k=0$ for all $k>n$.
Let $\xi:=(\xi_1,\xi_2,\xi_3,\dots)\in\ell^2$.
Then we have
\begin{align*}
\|T_c(\xi)\|_{\ell^2}^2
&=\sum_{j=1}^{n}\Bigl|c_{j}\xi_j+\sum_{k=j}^n (c_{k+1}-c_{k})\xi_{k+1}\Bigr|^2\\
&=\sum_{j=1}^{n}\Bigl|\sum_{k=j}^{n} (c_{k+1}-c_{k})(\xi_{k+1}-\xi_j)\Bigr|^2\\
&\le\sum_{j=1}^{n}\Bigl(\sum_{k=j}^n |c_{k+1}-c_k|^2\Bigr)\Bigl(\sum_{k=j}^n |\xi_{k+1}-\xi_j|^2\Bigr)\\
&\le\Bigl(\sum_{k=1}^n |c_{k+1}-c_k|^2\Bigr)\sum_{j=1}^{n}\sum_{k=j}^n |\xi_{k+1}-\xi_j|^2.
\end{align*}
Now, 
\[
\sum_{j=1}^{n}\sum_{k=j}^n |\xi_{k+1}-\xi_j|^2
=\frac{1}{2}\sum_{p=1}^{n+1}\sum_{q=1}^{n+1}|\xi_p-\xi_q|^2
=(n+1)\sum_{k=1}^{n+1} |\xi_p|^2-\Bigl|\sum_{k=1}^{n+1} \xi_p\Bigr|^2.
\]
The right-hand side is at most $(n+1)\sum_{k=1}^{n+1}|\xi_k|^2\le (n+1)\|\xi\|_{\ell^2}^2$.
Hence
\[
\|T_c(\xi)\|_{\ell^2}^2\le (n+1)\sum_{k=1}^n|c_{k+1}-c_k|^2\|\xi\|_{\ell^2}^2.
\]
This shows that $\|T_c\|^2\le (n+1)\sum_{k=1}^n|c_{k+1}-c_k|^2$, completing the proof.
\end{proof}

\begin{proof}[Proof of Corollary~\ref{C:kernels}]
The inequalities in (i), (ii) and (iii) all follow directly from
Theorems~\ref{T:main} and~\ref{T:Tc}\,(ii).
It remains to justify the sharpness of the constants.

(i) Let $f(z):=nz^{n+1}-(n+1)z^n+1=(z-1)(nz^n-z^{n-1}-z^{n-2}-\dots-1)$.
Then $(D_n*f)(z)=-(n+1)z^n+1=-n-(n+1)(z-1)(z^{n-1}+\dots+z+1)$.
By Theorem~\ref{T:localD}, we have 
\[
\cD_1(f)=n(n+1)
\quad\text{and}\quad
\cD_1(D_n*f)=(n+1)^2n=(n+1)\cD_1(f).
\]

(ii) Let $f(z):=z^{n+1}-(n+1)z+n=(z-1)(z^n+z^{n-1}+\cdots+z-n)$.
Then $(K_n*f)(z)=n-nz=-n(z-1)$. By Theorem~\ref{T:localD} again,  
\[
\cD_1(f)=n(n+1)
\quad\text{and}\quad
\cD_1(K_n*f)=n^2=\frac{n}{n+1}\cD_1(f).
\]

(iii) Let $f(z):=z^{2n}-2z^n+1=(z-1)(z^{2n-1}+\dots+z^n-z^{n-1}-\dots-1)$.
Then $(V_n*f)(z)=-2z^n+1=-1-2(z-1)(z^{n-1}+\dots+z+1)$. 
Once more, using Theorem~\ref{T:localD},
we have
\[
\cD_1(f)=2n
\quad\text{and}\quad
\cD_1(V_n*f)=4n=2\cD_1(f).
\]

Thus  the constants in (i), (ii) and (iii) are sharp for all $n\ge0$.
\end{proof}


\section{Proofs of Theorem~\ref{T:Tc}\,(iii) and Corollary~\ref{C:dilate}}\label{S:Tc(iii)}
We shall prove Theorem~\ref{T:Tc}\,(iii) using a superposition technique
enshrined in the following lemma.

\begin{lemma}\label{L:superposition}
Let $(h_n)_{n\ge0}$ be a sequence of formal power series such that
$\sum_{n=0}^\infty|h_n(0)|<\infty$ and
$\sum_{n=0}^\infty\|T_{h_n}\|<\infty$.
Then the formal power series $h:=\sum_{n=0}^\infty h_n$ converges coefficientwise,
and $T_h$ acts as a bounded operator on $\ell^2$, with
\begin{equation}\label{E:superposition}
\|T_h\|\le \sum_{n=0}^\infty \|T_{h_n}\|.
\end{equation}
\end{lemma}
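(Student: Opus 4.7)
The plan is to leverage the fact that the map $(c_k)_{k\ge1}\mapsto T_c$ sending a coefficient sequence to its associated matrix is linear, and then to reduce the norm bound to a standard triangle inequality argument on a dense subspace of $\ell^2$.

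First, I would establish that $h$ is a well-defined formal power series. Writing $h_n(z)=\sum_{k\ge0}c_k^{(n)}z^k$, absolute convergence of the $0$-th coefficients follows directly from the hypothesis $\sum_n|h_n(0)|<\infty$. For $k\ge1$, the lower bound in Theorem~\ref{T:Tc}\,(i) gives $|c_k^{(n)}|\le \|T_{h_n}\|$, so $\sum_n c_k^{(n)}$ converges absolutely to a number $c_k$, and we set $h(z):=\sum_{k\ge0}c_k z^k$.

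Next, let $\xi\in\ell^2$ be finitely supported, say $\xi_k=0$ for $k>M$. Then $(T_h\xi)_j$ reduces to a finite sum,
\[
(T_h\xi)_j=c_j\xi_j+\sum_{k=j+1}^{M}(c_k-c_{k-1})\xi_k \quad (j\le M),
\]
and $(T_h\xi)_j=0$ for $j>M$. Substituting $c_k=\sum_n c_k^{(n)}$ and interchanging the absolutely convergent sum over $n$ with the finite sum over $k$, I obtain $(T_h\xi)_j=\sum_n(T_{h_n}\xi)_j$ for each $j$, which I can upgrade to the identity $T_h\xi=\sum_{n=0}^\infty T_{h_n}\xi$ in $\ell^2$ since $\|T_{h_n}\xi\|_{\ell^2}\le\|T_{h_n}\|\,\|\xi\|_{\ell^2}$ makes the series absolutely convergent in $\ell^2$. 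The triangle inequality then yields
\[
\|T_h\xi\|_{\ell^2}\le \sum_{n=0}^\infty\|T_{h_n}\xi\|_{\ell^2}\le \Bigl(\sum_{n=0}^\infty\|T_{h_n}\|\Bigr)\|\xi\|_{\ell^2}.
\]

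Finally, since the finitely supported sequences are dense in $\ell^2$, this estimate shows that the matrix action of $T_h$, initially defined on that dense subspace, extends uniquely to a bounded operator on $\ell^2$ satisfying \eqref{E:superposition}. I do not foresee a serious obstacle: the argument is essentially a routine superposition, and the only delicate point is the coefficientwise convergence step, which is controlled cleanly by the $|c_k^{(n)}|\le\|T_{h_n}\|$ bound extracted from Theorem~\ref{T:Tc}\,(i).
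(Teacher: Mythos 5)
Your proof is correct and follows essentially the same route as the paper's: the key step in both is the triangle inequality $\bigl\|\sum_n T_{h_n}\bigr\|\le\sum_n\|T_{h_n}\|$ applied to the absolutely convergent series of operators. The only cosmetic difference is that the paper sums the $T_{h_n}$ directly in the Banach space of bounded operators on $\ell^2$ (a Cauchy-series argument) and then reads off the coefficientwise convergence of $h$ from the entrywise convergence of the matrices, whereas you first extract the bound $|c_k^{(n)}|\le\|T_{h_n}\|$ to define $h$ coefficientwise and then verify $T_h\xi=\sum_n T_{h_n}\xi$ on finitely supported vectors before extending by density.
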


\begin{proof}
For each pair of integers $r \le s$, we have
\begin{equation}\label{E:rs}
\Bigl\|\sum_{n=r}^sT_{h_n}\Bigr\|
\le \sum_{n=r}^{s}\|T_{h_n}\|.
\end{equation}
It follows that the series $\sum_{n=0}^\infty T_{h_n}$ is Cauchy in  
the Banach space of bounded  operators on $\ell^2$,
hence convergent in that space, to $T$ say.
As convergence in the space of bounded operators on $\ell^2$
implies the entrywise convergence of the corresponding matrices,
it follows that $\sum_{n=0}^\infty h_n$ converges coefficientwise to $h$ and 
that $T=T_h$. (Note that, since the constant coefficients of $h_n$ do not appear in $T_{h_n}$,
we need to assume $\sum_n|h_n(0)|<\infty$ separately.) Thus $T_h$ acts as a bounded operator on $\ell^2$.
Finally, letting $r=0$ and $s\to\infty$ in \eqref{E:rs}, we obtain \eqref{E:superposition}.
\end{proof}

\begin{proof}[Proof of Theorem~\ref{T:Tc}\,(iii)]
Let $(c_n)_{n\ge1}$ be a sequence such that
\[
\lim_{n\to\infty}c_n=0 
\quad\text{and}\quad
\sum_{n=1}^\infty\sqrt{n(n+1)}|c_{n+2}-2c_{n+1}+c_n|<\infty.
\]
Set $c_0:=0$. For each $n\ge0$,
let $h_n:=\lambda_nK_{n}$,
where
\[
\lambda_n:=(n+1)(c_{n+2}-2c_{n+1}+c_{n})
\quad\text{and}\quad
K_n(z):=\sum_{k=0}^{n}(1-k/(n+1))z^k.
\]
Then $K_{n}(0)=1$ for each $n$, 
and   $\|T_{K_{n}}\|=\sqrt{n/(n+1)}$
by Corollary~\ref{C:kernels}\,(ii).
Since $\sum_{n=0}^\infty|\lambda_n|<\infty$, 
we may apply Lemma~\ref{L:superposition} to
deduce that $\sum_{n=0}^\infty \lambda_n K_{n}$
converges coefficientwise to a power series $h$ such that
\[
\|T_h\|\le \sum_{n=0}^\infty |\lambda_n|\|T_{K_{n}}\|=\sum_{n=1}^\infty\sqrt{n(n+1)}|c_{n+2}-2c_{n+1}+c_n|.
\]
We claim  that $h(z)=\sum_{k=0}^\infty c_kz^k$.
If so, then 
\[
\|T_c\|=\|T_h\|\le \sum_{n=1}^\infty\sqrt{n(n+1)}|c_{n+2}-2c_{n+1}+c_n|,
\]
and the theorem is proved.

It remains to justify the claim. We have
\[
\sum_{n=0}^\infty\lambda_nK_n(z)
=\sum_{n=0}^\infty\lambda_n\sum_{k=0}^{n}\Bigl(1-\frac{k}{n+1}\Bigr)z^k
=\sum_{k=0}^\infty\Bigl(\sum_{n=k}^\infty\lambda_n\Bigl(1-\frac{k}{n+1}\Bigr)\Bigr)z^k.
\]
Thus $h(z)=\sum_{k=0}^\infty d_kz^k$, where
\[
d_k:=\sum_{n=k}^\infty\lambda_n\Bigl(1-\frac{k}{n+1}\Bigr)
\quad(k\ge0).
\]
Now, for each $k\ge0$, we have
\[
d_{k}-d_{k+1}=\sum_{n=k+1}^\infty\frac{\lambda_n}{n+1}, 
\]
and so
\[
(d_{k}-d_{k+1})-(d_{k+1}-d_{k+2})=\frac{\lambda_k}{k+1}=c_{k+2}-2c_{k+1}+c_{k}.
\]
It follows that $u_k:=c_k-d_k$ satisfies the relation $u_{k+2}-2u_{k+1}+u_{k}=0$ for all $k\ge0$.
Therefore $u_k=\alpha k+\beta$ for some constants $\alpha,\beta$. 
Now $c_k\to0$ by hypothesis, and $d_k\to0$ by its very definition, 
so $u_k\to0$, and thus necessarily $\alpha=\beta=0$. 
Hence $d_k=c_k$ for all $k$, and so finally $h(z)=\sum_{k=0}^\infty c_kz^k$,
as claimed.
\end{proof}

\begin{proof}[Proof of Corollary~\ref{C:dilate}]
As remarked in the introduction, 
we have $f_r=P_r*f$, where $P_r(z):=\sum_{k=0}^\infty r^kz^k$,
the Poisson kernel. By Theorems~\ref{T:main} and ~\ref{T:Tc}\,(iii), 
we have $\cD_\omega(P_r*f)\le \|T_{P_r}\|^2\cD_\omega(f)$,
where
\begin{align*}
\|T_{P_r}\|&\le\sum_{k=1}^\infty \sqrt{k(k+1)}|r^{k+2}-2r^{k+1}+r^{k}|\\
&=(1-r)^2\sum_{k=1}^\infty \sqrt{k(k+1)}r^{k}\\
&\le (1-r)^2\Bigl(\sum_{k=1}^\infty k r^k\Bigr)^{1/2}\Bigl(\sum_{k=1}^\infty (k+1)r^k\Bigr)^{1/2}\\
&=r(2-r)^{1/2},
\end{align*}
the last inequality arising from the Cauchy--Schwarz inequality.
\end{proof}


\section{Proof of Theorem~\ref{T:Fejer} }\label{S:Fejer}

Note that $s_n(f)$ and $\sigma_n(f)$ can be written as Hadamard products,
namely
\[
s_n(f)=D_n*f 
\quad\text{and}\quad
\sigma_n(f)=K_n*f,
\]
where $D_n$ and $K_n$ are the Dirichlet and Fej\'er kernels already studied
in Corollary~\ref{C:kernels}. We make key use of this remark in what follows.

\begin{proof}[Proof of Theorem~\ref{T:Fejer}]
(i) By the parallelogram identity,  we have
\[
\cD_\omega(\sigma_n(f)-f)+\cD_\omega(\sigma_n(f)+f)=2\cD_\omega(\sigma_n(f))+2\cD_\omega(f).
\]
Corollary~\ref{C:kernels}\,(ii) gives
\[
\cD_\omega(\sigma_n(f))=\cD_\omega(K_n*f)\le\cD_\omega(f) \quad(n\ge0).
\]
Also,  $\sigma_n(f)\to f$ locally uniformly on $\DD$,
so, by \eqref{E:Domega} and Fatou's lemma,
\[
\liminf_{n\to\infty}\cD_\omega(\sigma_n(f)+f)\ge 4\cD_\omega(f).
\]
It follows that 
\[
\limsup_{n\to\infty}\cD_\omega(\sigma_n(f)-f)\le 2\cD_\omega(f)+2\cD_\omega(f)-4\cD_\omega(f)=0.
\]
Finally, since  $\sigma_n(f)(0)\to f(0)$, we conclude that
\[
\|\sigma_n(f)-f\|_{\cD_\omega}^2=|\sigma_n(f)(0)-f(0)|^2+\cD_\omega(\sigma_n(f)-f)\to0.
\]

(ii) We consider $f\mapsto s_n(f)$ as a linear map  $:\cD_1\to\cD_1$.
The calculations in the proof of Corollary~\ref{C:kernels} show that, 
if  $f_n(z):=nz^{n+1}-(n+1)z^n$, then 
\begin{align*}
\|f_n\|_{\cD_1}^2&=\cD_1(f_n)=n(n+1),\\
\|s_n(f_n)\|_{\cD_1}^2&=\|D_n*f_n\|_{\cD_1}^2=(n+1)^2n.
\end{align*}
Thus the norm of $s_n$ as a linear operator satisfies $\|s_n\|\ge\sqrt{n+1}$.
Hence $\sup_n\|s_n\|=\infty$. By the Banach--Steinhaus theorem,
there exists $f\in\cD_1$ such that $\sup_n\|s_n(f)\|_{\cD_1}=\infty$.
In particular, $\|s_n(f)-f\|_{\cD_1}\not\to0$.
\end{proof}


\section{Concluding remarks and questions}\label{S:conclusion}

(1) Once it is known that polynomials are dense in $\cD_\omega$,  it is a routine matter to prove that
the following statements are equivalent:
\begin{itemize}
\item
$\lim_{n\to\infty}\|h_n*f-h*f\|_{\cD_\omega}=0$ for all superharmonic  $\omega$ and all $f\in\cD_\omega$.
\item
$h_n\to h$ coefficientwise and $\sup_n\|T_{h_n}\|<\infty$.
\end{itemize}
Evidently, this generalizes Theorem~\ref{T:Fejer}. However, the interest of Theorem~\ref{T:Fejer}
lies in the fact that, since its  proof makes no appeal to the density of polynomials, it can be used to 
give an independent proof of this density.

\medskip
(2) Although \eqref{E:dilate} provides an estimate of the constant $C$ such that
$\cD_\omega(f_r)\le C\cD_\omega(f)$ that is better than those known to date, it is clearly still not optimal.
Indeed, we know that best constant is exactly $\|T_{P_r}\|^2$. Is there a `nice' algebraic expression for this norm?

\medskip
(3) Which sequences $(c_k)$ have the property that $\|T_c\|<\infty$? There are various equivalent ways to reformulate this question. For example, which real sequences $(a_k)$ have the property that the self-adjoint, L-shaped matrix
\[
\begin{pmatrix}
a_1 &a_2 &a_3 &a_4 &\dots\\
a_2 &a_2 &a_3 &a_4 &\dots\\
a_3 &a_3 &a_3 &a_4 &\dots\\
a_4 &a_4 &a_4 &a_4 &\dots\\
\vdots &\vdots &\vdots &\vdots &\ddots
\end{pmatrix}
\]
acts as a bounded operator on $\ell^2$?

\medskip
(4) Can we characterize the Hadamard multipliers $h(z):=\sum_{k=0}^\infty c_kz^k$ of $\cD_\omega$ for a fixed superharmonic weight $\omega$ on $\DD$? We know that:
\begin{itemize}
\item if $\zeta\in\DD$, then $h$ is a Hadamard multiplier of $\cD_\zeta \iff \sup|c_k|<\infty$,
\item if $\zeta\in\TT$, then $h$ is a Hadamard multiplier of $\cD_\zeta \iff \|T_c\|<\infty$.
\end{itemize}
Are there examples of superharmonic weights $\omega$ for which the characterization lies somewhere strictly between these two extremes?


\bibliographystyle{plain}
\bibliography{manuscript}


\end{document}